\newtheorem{theorem}{Theorem}[section]
\newtheorem{proposition}[theorem]{Proposition}
\newtheorem{question}[theorem]{Question}
\theoremstyle{definition}
\newtheorem{definition}[theorem]{Definition}
\theoremstyle{remark}
\newtheorem{remark}[theorem]{Remark}
\numberwithin{equation}{section}
\newcommand{\Q}{\mathbb{Q}}
\newcommand{\rest}{\upharpoonright}
\def\norm#1{\left\Vert#1\right\Vert}
\begin{document}

\begin{center}
{\large\bf On topological groups with an approximate\\[1.5mm] fixed point property}

\footnotetext{2010 {\it Mathematics Subject Classification}. Primary 47-03,
47H10, 54H25; Secondary 37B05.} \footnotetext{{\it Correspondence to: Brice R.
Mbombo E-mail: brice@ime.usp.br. }}

\vskip.20in

Cleon S. Barroso$^{1}$, Brice R. Mbombo$^{2}$, Vladimir G. Pestov$^{3,4}$  \\[2mm]

 {\footnotesize
         $^{1}$Departamento de Matem\'atica, Universidade Federal
do Cear\'a, Campus do Pici, Bl. 914, 60455-760, Fortaleza, CE, Brazil

{\it e}-mail: cleonbar@mat.ufc.br.
\\[1mm]

$^{2}$ Departamento de Matem\'atica, Instituto de Matem\'atica e Estat\'istica, Universidade de S\~ao Paulo, Rua do Mat\~ao, 1010,
05508-090,  S\~ao Paulo, SP, Brazil

{\it e}-mail: brice@ime.usp.br
\\[1mm]

$^{3}$ Departamento de Matem\'atica, Universidade Federal de Santa Catarina,
Trindade, Florian\'opolis, SC, 88.040-900, Brazil
\\[1mm]

$^{4}$ Department of Mathematics and Statistics, University of Ottawa, Ottawa, ON, K1N 6N5,
  Canada

{\it e}-mail: vpest283@uottawa.ca.}
\end{center}
\vskip .5cm

{\small
  
\subsection*{Abstract} {\em A topological group $G$ has the Approximate Fixed Point (AFP) property on a bounded
  convex subset $C$ of a locally convex space if every continuous affine action of $G$ on $C$ admits a net $(x_i)$, $x_i\in C$, such that
$x_{i}-gx_{i}\longrightarrow 0$ for all $g\in G$. In this work, we study the
relationship between this property and amenability.}
\\[1mm]
\noindent {\bf  Keywords:} Amenable groups, Approximate fixed point property , F\o lner property, Reiter property.

\subsection*{Resumo} {\em Um grupo topol\'ogico $G$ possui a propriedade da aproxima\c c\~ao de pontos fixos (AFP) sob um subconjunto limitado e convexo, $C$,
  de um espa\c co localmente convexo, se cada a\c c\~ao cont\'\i nua de $G$ sobre $C$  pelas transforma\c c\~oes afins admite uma sequ\^encia generalizada $(x_i)$, $x_i\in C$, tal que
  $x_{i}-gx_{i}\longrightarrow 0$ qual que seja $g\in G$. Neste trabalho estudamos o rela\c c\~ao entre os grupos com esta propriedade e os grupos medi\'aveis.}\\[1mm]
\noindent {\bf Palavras-chave:} Grupos Medi\'aveis, Propriedade da aproximac\~ao de pontos fixos, Propriedade de F\o lner, Propriedade de Reiter. 
\\[1mm]
{\bf Se\c c\~ao da Academia:} Ci\^encias Matem\'aticas.
}



\section{Introduction}

One of the most useful known characterizations of amenability is stated in terms of a fixed point property. A classical theorem of \citep{Day} says that a topological group $G$ is amenable
if and only if every continuous affine action of $G$ on a compact convex subset $C$ of a locally convex space has a fixed point, that is, a point $x\in C$ with $g\cdot x=x$ for all $g\in G$.
This result generalizes earlier theorems of \citep{kakutani} and \citep{Markov} obtained for abelian acting groups.
 
At the same time, an active branch of current research is devoted to the existence of approximate fixed points for
single maps. Basically, given a bounded, closed convex set $C$ and a map $f:C\longrightarrow C$, one wants to
find a sequence $(x_n) \subset C$ such that $x_{n}-f(x_{n})\longrightarrow 0$. A sequence with this property will be called an approximate fixed point sequence.

The main motivation for this topic is purely mathematical and comes from several instances of the
failure of the fixed point property in convex sets that are no longer assumed to be compact, cf. \citep{DOBO, EDELSTEIN, FLORET, KLEE, NOWAK} and references therein. One of the most emblematic results on this matter states that if $C$ is a non-compact, bounded, closed convex subset of a normed space, then there exists a Lipschitz map $f :C\longrightarrow C$ such that $\inf_{x\in C}\|x-f(x)\|> 0$ \citep{LIN}. Notice in this case that there is no approximate fixed point sequence for $f$. Previous results of topological flavour were discovered by many authors, including \citep{KLEE} who has characterized the fixed point property in terms of compactness in the framework of
metrizable locally convex spaces. 

Both results give rise to the natural question whether a given space
without the fixed point property might still have the approximate fixed point property. The first
thoughts on this subject were developed in \citep{Cleon1, Cleon2, Cleon3, Cleon4}  in the context of weak topologies. Another mathematical motivation for the study of the
approximate fixed point property is the following open question.

\begin{question}
        Let $X$ be a Hausdorff locally convex space. Assume $C\subset X$ is a sequentially compact convex set and
$f:C\longrightarrow C$ is a sequentially continuous map. Does $f$ have a fixed point?
\end{question}

So far, the best answers for this question were delivered in \citep{Cleon3, Cleon4}. Let us
just summarize the results.

\begin{theorem}[Theorem $2.1$, Proposition $2.5$(i) in \citep{Cleon3, Cleon4}]
Let $X$ be a topological vector space, $C\subset X$ a nonempty convex set, and let $f:C\longrightarrow \overline{C}$ be a map. 
\begin{enumerate}
\item If $C$ is bounded, $f(C)\subset C$ and $f$ is affine, then $f$ has an approximate fixed point sequence.
\item If $X$ is locally convex and $f$ is sequentially continuous with totally bounded range, then $0\in \overline{\{x-f(x): x\in C\}}$. And indeed, $f$ has a fixed point provided that $X$ is metrizable.

\item If $C$ is bounded and $f$ is $\tau$-to-$\sigma(X,Z)$ sequentially continuous, then  $0\in \overline{\{x-f(x): x\in C\}}^{\sigma(X,Z)}$, where $\tau$ is the original topology of $X$ and $Z$ is a subspace of the topological dual $X^{\star}$. And, moreover, $f$ has a $\sigma(X,Z)$-approximate fixed point sequence provided that $Z$ is separable under the
strong topology induced by $X$.
\end{enumerate}
\end{theorem}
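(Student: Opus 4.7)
The plan is to attack the three parts by three classical techniques: Ces\`aro averaging, Schauder approximation, and the weak-topology variant of Schauder.

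\emph{Part (1).} Use the ergodic trick. Fix $x_0\in C$ and set $x_n := \frac{1}{n}\sum_{k=0}^{n-1} f^k(x_0)$, which lies in $C$ by convexity and $f(C)\subset C$. Affineness of $f$ yields the telescoping identity
\[
x_n - f(x_n) \;=\; \tfrac{1}{n}\bigl(x_0 - f^n(x_0)\bigr),
\]
and boundedness of $C$ forces every continuous seminorm of the right-hand side to $0$. There is no real obstacle here; this is a direct extension of the classical Kakutani--Markov argument, repackaged to produce approximate rather than exact fixed points.

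\emph{Part (2).} Imitate Schauder's proof in the locally convex setting. Given a continuous seminorm $p$ and $\varepsilon>0$, total boundedness of $f(C)$ furnishes a finite $p$-net $y_1,\dots,y_m\in f(C)$; a partition of unity $\{\phi_i\}$ subordinate to $\{B_p(y_i,\varepsilon)\}$ produces the finite-dimensional approximation
\[
\tilde f(x) \;:=\; \sum_{i=1}^m \phi_i(f(x))\,y_i,
\]
which maps $\overline{C}$ into the compact simplex $K:=\mathrm{co}\{y_1,\dots,y_m\}\subset \overline{C}$ and satisfies $p(\tilde f(x)-f(x))<\varepsilon$. On the finite-dimensional $K$, sequential continuity of $f$ collapses to genuine continuity, so Brouwer supplies $x^*\in K$ with $\tilde f(x^*)=x^*$, whence $p(x^*-f(x^*))<\varepsilon$. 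Approximating $x^*\in \overline{C}$ by points of $C$ and letting $p,\varepsilon$ vary shows $0\in\overline{\{x-f(x):x\in C\}}$. If $X$ is metrizable, rerun the construction along a countable neighborhood basis of $0$ to build a sequence $(x_n)$ with $x_n-f(x_n)\to 0$; total boundedness of $f(C)$ then gives a subsequential limit $f(x_{n_k})\to y$, forcing $x_{n_k}\to y$, and sequential continuity of $f$ delivers $f(y)=y$.

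\emph{Part (3).} Rerun part (2) in the weak topology $\sigma(X,Z)$: replace continuous seminorms of $X$ by the weak seminorms $x\mapsto|\langle\varphi,x\rangle|$ for finite subsets $\varphi\subset Z$. Boundedness of $C$ supplies the $\sigma(X,Z)$-total boundedness needed for the finite-dimensional cut-off, and the Brouwer step is unchanged. The main obstacle --- and the reason the theorem distinguishes the closure statement from a genuine sequence --- is promoting a net of approximate fixed points to a true sequence. This is precisely where the hypothesis that $Z$ is separable in the strong topology enters: separability metrizes $\sigma(X,Z)$ on the bounded set $C$, which reduces the problem to the metrizable case already handled in part (2). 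Without separability, only the closure statement survives.
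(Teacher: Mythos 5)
The paper does not actually prove this theorem --- it is imported verbatim from \citep{Cleon3, Cleon4} --- so I am judging your argument on its own merits. Part (1) is correct and is exactly the standard Markov--Kakutani averaging: the telescoping identity $x_n-f(x_n)=\tfrac1n(x_0-f^n(x_0))$ is right, and boundedness of $C-C$ finishes it. One small slip: part (1) is stated for an arbitrary topological vector space, which may admit no nonzero continuous seminorms; conclude instead directly from boundedness (for every neighbourhood $U$ of $0$ one has $\tfrac1n(C-C)\subset U$ eventually). The overall strategy for (2) and (3) --- Schauder projection onto a finite simplex, Brouwer, then metrization of $\sigma(X,Z)$ on bounded sets when $Z$ is strongly separable --- is also the right one, and your observations that sequential continuity upgrades to continuity on a finite-dimensional metrizable simplex and that bounded sets are $\sigma(X,Z)$-totally bounded are correct and necessary.

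There are, however, two genuine gaps in part (2). First, the net points $y_1,\dots,y_m$ lie in $f(C)\subset\overline{C}$, so $K=\mathrm{co}\{y_1,\dots,y_m\}$ need not be contained in $C$; since $f$ is defined only on $C$, the map $\tilde f$ is not defined on $K$ (your phrase ``$\tilde f$ maps $\overline{C}$ into $K$'' is not meaningful), and the Brouwer fixed point $x^*\in K$ need not belong to $C$, so $x^*-f(x^*)$ need not be a member of $\{x-f(x):x\in C\}$. Your patch --- approximating $x^*$ by points of $C$ --- fails because controlling $f(x')-f(x^*)$ requires continuity of $f$ at $x^*$, and you only have sequential continuity in a possibly non-metrizable space. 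The correct fix is to perturb at the start: replace each $y_i$ by some $y_i'\in C$ with $p(y_i-y_i')<\varepsilon$ (possible since $y_i\in\overline{C}$), keep the partition of unity subordinate to the balls around the $y_i$, and land in $K'=\mathrm{co}\{y_i'\}\subset C$ at the cost of $2\varepsilon$. Second, the fixed-point clause in the metrizable case is not proved as written: total boundedness of $f(C)$ yields only a \emph{Cauchy} subsequence of $(f(x_{n_k}))$, which need not converge unless $X$ (or $\overline{f(C)}$) is complete; and even granting a limit $y$, one only gets $y\in\overline{C}$, whereas $f$ is defined on $C$, so the identity $f(y)=y$ cannot be asserted. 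Some additional hypothesis or argument (completeness, closedness of $C$, or a different route through the compactness of $\overline{\mathrm{co}}\,f(C)$) must be invoked here; as it stands this step would fail.
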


The idea of approximate fixed points is an old one. Apparently the first result on this kind
was exploited in \citep{Scarf}, where a constructive method for computing fixed points of continuous
mappings of a simplex into itself was described. Other important works along these lines can be found in 
 \citep{Haz},  \citep{Had}, \citep{Idzik}, \citep{Park}. 
 Approximate fixed point property has a lot of applications in many interesting problems. In \citep{Kalenda} it is proved that a Banach space has the weak-approximate fixed point property if and only if it does not contain any isomorphic copy of $\ell^{1}$. As another instance, it can be used to study the existence of limiting-weak solutions for differential equations in reflexive Banach spaces \citep{Cleon1}.

In this note, we study the existence of common approximate fixed points
for a set of transformations forming a topological group. Not surprisingly, the approximate fixed point property for an acting group
$G$ is also closely related to amenability of $G$, however the relationship appears to be more complex. 

There is an extremely broad variety of known definitions of amenability of a topological group, which are typically equivalent in the context of locally compact groups yet may diverge beyond this class. One would expect the approximate fixed point property (or rather ``properties,'' for they depend on the class of convex sets allowed) to provide a new definition of amenability for some class of groups, and delineate a new class of topological groups in more general contexts. This indeed turns out to be the case.

We show that a discrete group $G$ is amenable if and only if every continuous
affine action of $G$ on a bounded convex subset $C$ of a locally convex space
(LCS) $X$ admits approximate fixed points. For a locally compact group, a
similar result holds if we consider actions on bounded convex sets $C$ which
are complete in the additive uniformity, while in general we can only prove that
$G$ admits weakly approximate fixed points. This criterion of amenability is no
longer true in the more general case of a Polish group, even if amenability of
Polish groups can be expressed in terms of the approximate fixed point property
on bounded convex subsets of the Hilbert space.

We view our investigation as only the first step in this direction, and so we close the article with a discussion of open problems for further research.

\section{Amenability}

Here is a brief reminder of some facts about amenable topological groups. For a
more detailed treatment, see e.g. \citep{Paterson}. All the topologies considered
here are assumed to be Hausdorff.

Let $G$ be a topological group. The {\em right uniform structure} on $G$ has as
basic entourages of the diagonal the sets of the form $U_V = \{ (g,h) \in G \times
G \mid hg^{-1} \in V \},$ where $V$ is a neighbourhood of the identity $e$ in $G$.
This structure is invariant under right translations. Accordingly, a function $f : G
\longrightarrow \mathbb{R}$ is right uniformly continuous if for all $\varepsilon >
0$, there exists a neighbourhood $V$ of $e$ in $G$ such that $xy^{-1}\in V$
implies  $|f(x)-f(y)|< \varepsilon$ for every $x,y\in G$.  Let $RUCB(G)$ denote the
space of all right uniformly continuous functions equipped with the uniform norm.
The group $G$ acts on $RUCB(G)$ on the left continuously by isometries: for all
$g\in G $ and $f\in RUCB(G),\,\,g \cdot f=\,^{g}f$ where $^{g}f(x)=f(g^{-1}x)$ for all $x\in
G$.

\begin{definition}

\label{defam} A topological group $G$ is {\em amenable} if it admits an invariant
{\em mean} on $RUCB(G)$, that is, a positive linear functional $\mathrm{m}$
with $\mathrm{m}(1)=1$, invariant under the left translations.
\end{definition}
Examples of such groups include finite groups, solvable topological groups
(including nilpotent, in particular abelian topological groups) and compact groups.
Here are some more examples:
\begin{enumerate}
\item  The unitary group $\mathcal{U}(\ell^{2})$, equipped with the strong operator topology \citep{Harpe1}.
\item   The infinite symmetric group $S_\infty$ with its unique Polish topology.
 \item The group $\mathcal{J}(\mathrm{k})$ of all formal power series in a variable $x$ that have the form
 $f(x)=x+\alpha_{1}x^{2}+\alpha_{2}x^{3}+....,\,\,\, \alpha_{n}\in \mathrm{k}$,
 where $\mathrm{k}$ is a commutative unital ring \citep{BB}.
\end{enumerate}
Let us also mention some examples of non-amenable groups:

\begin{enumerate}
\item The free discrete group $\mathbb{F}_2$ of two generators. More generally, every locally compact group
containing  $\mathbb{F}_2$ as a closed subgroup.
\item The unitary group $\mathcal{U}(\ell^{2})$, with the uniform operator topology \citep{Harpe2}.
\item The group $\mathrm{Au}t(X,\mu)$ of all measure-preserving automorphisms of a standard Borel measure space $(X,\mu)$, with the uniform topology, i.e. the topology determined by the metric $d(\tau,\sigma)=\mu\{x\in
 X:\,\tau(x)\neq\sigma(x)\}$  \citep{GP1}.
\end{enumerate}

The following is one of the main criteria of amenability in the locally
compact case.
\begin{theorem}[F\o lner's condition]
Let $G$ be a locally compact group and denote $\lambda$ the left invariant Haar
measure. Then $G$ is amenable if and only if $G$ satisfies the F\o lner condition: for
every compact set $F\subseteq G$ and $\varepsilon >0$, there is a Borel set
$U\subseteq G$ of positive finite Haar measure $\lambda(U)$ such that
$\frac{\lambda(xU\bigtriangleup U)}{\lambda(U)}< \varepsilon$ for each $x\in F$.
\end{theorem}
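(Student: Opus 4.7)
My plan is to prove the equivalence by passing through an intermediate characterization, namely Reiter's property $(P_1)$: $G$ is amenable iff for every compact $F\subseteq G$ and every $\varepsilon>0$ there exists a nonnegative $f\in L^{1}(G)$ with $\|f\|_1=1$ such that $\|\,^{g}f-f\|_1<\varepsilon$ for all $g\in F$, where $^{g}f(x)=f(g^{-1}x)$. The F\o lner sets will then be obtained from such $f$ via a layer-cake/distribution-function argument.

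For the easy direction (F\o lner $\Rightarrow$ amenable), I would, for each pair $(F,\varepsilon)$ with $F\subseteq G$ compact and $\varepsilon>0$, pick a F\o lner set $U=U_{F,\varepsilon}$ and form the probability measure $\mathrm{m}_{F,\varepsilon}(\varphi)=\lambda(U)^{-1}\int_{U}\varphi\,d\lambda$ on $RUCB(G)$. A direct estimate using $\lambda(xU\triangle U)<\varepsilon\lambda(U)$ shows $|\mathrm{m}_{F,\varepsilon}(\,^{x}\varphi-\varphi)|\le\varepsilon\|\varphi\|_{\infty}$ for every $x\in F$. The net $(\mathrm{m}_{F,\varepsilon})$, ordered by reverse inclusion of $F$ and decreasing $\varepsilon$, lives in the weak-$*$ compact set of means on $RUCB(G)$; any cluster point is left-invariant, which is the definition of amenability given above.

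For the harder direction (amenable $\Rightarrow$ F\o lner), I first obtain Reiter's property $(P_1)$. Starting from a left-invariant mean $\mathrm{m}$ on $RUCB(G)$, I would use the standard fact that the set of means on $L^{\infty}(G)$ coming from $L^{1}$-densities is weak-$*$ dense in the set of all means, combined with a convexity/Namioka-trick argument (passing from weak to norm convergence in $L^{1}$ by taking convex combinations) to produce, for each compact $F$ and $\varepsilon>0$, a probability density $f\in L^{1}(G)$ with $\|\,^{g}f-f\|_1<\varepsilon$ for all $g\in F$. Then, for such $f$, write $U_t=\{x\in G:f(x)>t\}$ and use the layer-cake identities
\[
\|f\|_1=\int_0^{\infty}\lambda(U_t)\,dt,\qquad \|\,^{g}f-f\|_1=\int_0^{\infty}\lambda(gU_t\triangle U_t)\,dt.
\]
Enumerating $F=\{g_1,\dots,g_n\}$ in the case $|F|<\infty$ (which I would reduce to by covering compact $F$ by finitely many small translates and using right-uniform continuity of $g\mapsto\,^{g}f$ in $L^{1}$-norm), these give $\int_0^{\infty}\sum_{i=1}^{n}\lambda(g_iU_t\triangle U_t)\,dt<n\varepsilon\int_0^{\infty}\lambda(U_t)\,dt$, so by a Fubini/averaging argument there exists $t_0>0$ with $\sum_{i=1}^{n}\lambda(g_iU_{t_0}\triangle U_{t_0})<n\varepsilon\,\lambda(U_{t_0})$. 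Taking $U=U_{t_0}$ and replacing $\varepsilon$ by $\varepsilon/n$ at the start produces the required F\o lner set.

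The main obstacle is the step from the existence of an abstract invariant mean on $RUCB(G)$ to the Reiter-type $L^{1}$ statement, because the mean lives in the algebraic dual and need not come from an integrable density. Bridging this requires the Day convexity argument, replacing weak convergence of a net of densities (given by weak-$*$ density of $L^{1}$-means) by norm convergence in $L^{1}$ via convex combinations; this step depends crucially on local compactness, since it uses $L^{1}(G,\lambda)$ and Haar measure. Once Reiter's property is in hand, the layer-cake extraction of a single level set is essentially bookkeeping.
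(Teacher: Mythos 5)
The paper states this theorem without proof (it is quoted as a classical criterion), so there is no in-text argument to compare against; your overall architecture --- averaging over F\o lner sets to build an invariant mean in one direction, and Day's convexity/Mazur argument to get Reiter's property $(P_1)$ followed by a layer-cake extraction of level sets in the other --- is the standard route and is sound in outline. The easy direction and the passage from $(P_1)$ to F\o lner sets via $\|f\|_1=\int_0^\infty\lambda(U_t)\,dt$ and $\|\,^{g}f-f\|_1=\int_0^\infty\lambda(gU_t\triangle U_t)\,dt$ are correct (modulo harmless bookkeeping: with the paper's convention $^{x}\varphi(t)=\varphi(x^{-1}t)$ the mean $\lambda(U)^{-1}\int_U\cdot\,d\lambda$ picks up $x^{-1}U\triangle U$ rather than $xU\triangle U$, which is absorbed by replacing $F$ with $F\cup F^{-1}$; and one must check $0<\lambda(U_{t_0})<\infty$, which the strict averaging inequality and $\lambda(U_t)\le\|f\|_1/t$ give).

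The genuine gap is your reduction from compact $F$ to finite $F$. You propose to cover $F$ by finitely many translates $g_iV$ and use norm continuity of $g\mapsto\,^{g}f$ in $L^1$. But the modulus of continuity of $g\mapsto\,^{g}f$ at $e$ depends on $f$ (think of $f$ a normalized indicator of a very small set), and $f$ is only produced \emph{after} the finite set $\{g_1,\dots,g_n\}$ --- hence after $V$ --- has been fixed. The estimate $\|\,^{g_iv}f-\,^{g_i}f\|_1=\|\,^{v}f-f\|_1$ is therefore not under control for $v\in V$: the argument is circular. This finite-to-compact upgrade is a real obstruction and is exactly why the classical proofs take a detour: either one passes to $L^2$ (Reiter's $(P_2)$), observes that the positive-definite functions $g\mapsto\langle\lambda_g\xi_i,\xi_i\rangle$ converging pointwise to $1$ converge uniformly on compacta (Raikov), and returns to $L^1$ via $f=\xi^2$ with $\|\,^{g}f-f\|_1\le 2\|\lambda_g\xi-\xi\|_2$; or one runs the Day--Namioka convexity argument in a function space over $K\times G$; or one uses the Emerson--Greenleaf iteration. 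A second, smaller point you should make explicit: the paper's amenability is an invariant mean on $RUCB(G)$, while the weak-$*$ density of $L^1$-densities is a statement about means on $L^\infty(G)$; bridging these requires the standard convolution-smoothing step producing a topologically left-invariant mean on $L^\infty(G)$, which again uses local compactness. With those two repairs the proof is the classical one.
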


 Recall that a {\em Polish group} is a topological group whose topology is Polish, i.e., separable and completely metrizable.

 \begin{proposition}[See e.g. \citep{YMP}, Proposition $3.7$]
A Polish group $G$ is amenable if and only if every continuous affine action of
$G$ on a convex, compact and metrizable subset $K$ of a LCS $X$ admits a
fixed point.
\end{proposition}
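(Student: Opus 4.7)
The plan is to prove the two implications separately; the forward direction is Day's argument and does not even use metrizability of $K$, while the converse is where Polishness is essential. For the forward direction, assuming $G$ is amenable with invariant mean $\mathrm{m}$ on $RUCB(G)$, I would fix any $x_{0}\in K$ and, for each $\ell\in X^{*}$, set $\varphi_{\ell}(g)=\ell(g\cdot x_{0})$. Joint continuity of the action together with compactness of $K$ yields $\varphi_{\ell}\in RUCB(G)$, and $\ell\mapsto\mathrm{m}(\varphi_{\ell})$ is linear on $X^{*}$; Hahn--Banach plus compactness of $K$ produce a unique $x^{*}\in K$ representing this functional, and left-invariance of $\mathrm{m}$ translates into $g\cdot x^{*}=x^{*}$.

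For the converse, the plan is a \emph{separable reduction} followed by a weak-$\star$ compactness gluing. Because $G$ is Polish (hence separable) and acts on $RUCB(G)$ by norm-continuous isometries, every finite subset of $RUCB(G)$ is contained in a separable, unital, $G$-invariant $C^{*}$-subalgebra $A$. The Gelfand spectrum $X_{A}$ is compact metrizable, and the induced action on $X_{A}$ is jointly continuous (split $\chi_{n}(g_{n}^{-1}\cdot f) - \chi(g^{-1}\cdot f)$ into a norm-continuity piece and a weak-$\star$ convergence piece). Hence $P(X_{A})$ in the weak-$\star$ topology is a compact metrizable convex subset of a LCS carrying a continuous affine $G$-action, and the hypothesis yields a $G$-invariant probability measure on $X_{A}$, equivalently a $G$-invariant mean $m_{A}$ on $A$. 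To globalize, consider for each such $A$ the weak-$\star$ closed set $S_{A}=\{\mathrm{m}\in M(G):\mathrm{m}\rest A \text{ is $G$-invariant}\}$ inside the weak-$\star$ compact space $M(G)$ of means on $RUCB(G)$. Each $S_{A}$ is nonempty by Hahn--Banach extension, and $S_{A_{1}}\cap S_{A_{2}}\supseteq S_{A_{1}\vee A_{2}}\neq\emptyset$ since the $C^{*}$-algebra generated by $A_{1}\cup A_{2}$ is still separable and $G$-invariant; the finite intersection property then delivers a point in $\bigcap_{A}S_{A}$, i.e., a $G$-invariant mean on $RUCB(G)$.

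The delicate point, and likely the main obstacle, is the separable reduction: one must verify that the orbit under $G$ of a separable subset of $RUCB(G)$ remains separable (so that the generated $G$-invariant $C^{*}$-algebra is separable and $X_{A}$ is metrizable), and that the action on $X_{A}$ is jointly continuous. Both points rely on separability of $G$ together with norm-continuity of the left $G$-action on $RUCB(G)$; without Polishness the construction breaks down, which explains why the statement is restricted to Polish groups. The subsequent gluing is a routine weak-$\star$ compactness argument once the local invariant means are in hand.
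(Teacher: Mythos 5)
The paper does not prove this proposition at all---it is quoted from the literature (``See e.g.\ [YMP], Proposition 3.7'')---so there is no in-paper argument to compare against; I can only assess your plan on its own terms, and it is essentially the standard proof and is sound. The forward direction is Day's theorem (correctly observed not to need metrizability): the one step you gloss over is why left-invariance of $\mathrm{m}$ forces $g\cdot x^{*}=x^{*}$. Since the action of $g$ on $K$ is affine but not the restriction of a linear map on $X$, you need the barycentre identity $a(x^{*})=\mathrm{m}(a\circ\eta_{x_0})$ not just for $a=\ell\rest K$ but for all continuous affine functions $a$ on $K$ (apply it to $a=\ell\circ g$); this follows from the standard fact that functions of the form $\ell\rest K+c$ are uniformly dense in $A(K)$ for $K$ compact convex in a LCS, and is worth saying explicitly. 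The converse is where the content specific to Polish groups lies, and your separable reduction is the right mechanism: norm-continuity of the orbit maps $g\mapsto{}^{g}f$ (which is exactly what membership in $RUCB(G)$ gives) plus separability of $G$ makes each orbit separable, hence the generated invariant unital $C^{*}$-subalgebra $A$ separable, $X_A$ compact metrizable, the action on $X_A$ and on $P(X_A)$ jointly continuous by the splitting you indicate, and the hypothesis then yields an invariant state on $A$; the weak-$\star$ compactness gluing over the directed family of such $A$ (with state extension giving nonemptiness of each $S_A$) correctly produces an invariant mean on all of $RUCB(G)$. Net effect: a correct, self-contained proof of a statement the paper only cites.
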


For a most interesting recent survey about the history of amenable groups, see \citep{Grigor}.

\section{Groups with Approximate Fixed Point Property}

\begin{definition}\label{defAFP}
  Let $C$ be a convex bounded subset of a topological vector space $X$. Say that a
topological group $G$ has the {\em approximate fixed point} (AFP) property on
$C$ if every continuous affine action of $G$ on $C$ admits an approximate fixed
point net, that is, a net $(x_{i})\subseteq C$ such that for every $g \in G,\,\,x_{i}-
gx_{i}\longrightarrow 0$.
\end{definition}
We will analyse the AFP property of various classes of amenable topological
groups.

\subsection{Case of discrete groups}

\begin{theorem}
  The following properties are equivalent for a discrete group $G$:
 \begin{enumerate}
   \item \label{iDiscrete} $G$ is  amenable,
   \item \label{iiDiscrete} $G$ has the AFP property on every convex bounded subset of a locally convex space.
 \end{enumerate}
\end{theorem}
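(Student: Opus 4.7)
The natural strategy is to route the equivalence through the F\o lner/Reiter characterisations of amenability for discrete groups.

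For (\ref{iDiscrete}) $\Rightarrow$ (\ref{iiDiscrete}), fix a continuous affine action of $G$ on a bounded convex subset $C$ of a LCS $X$, and choose a base point $x_{0}\in C$. For each pair $(F,\varepsilon)$ with $F\subseteq G$ finite and $\varepsilon>0$, use a F\o lner set $U=U_{F,\varepsilon}\subseteq G$ satisfying $|gU\bigtriangleup U|/|U|<\varepsilon$ for all $g\in F$ to form the orbital average
\[
x_{F,\varepsilon}=\frac{1}{|U|}\sum_{h\in U}h\cdot x_{0}\in C.
\]
The key computation is
\[
g\cdot x_{F,\varepsilon}-x_{F,\varepsilon}=\frac{1}{|U|}\Bigl(\sum_{h\in gU\setminus U}h\cdot x_{0}-\sum_{h\in U\setminus gU}h\cdot x_{0}\Bigr)=\alpha_{g}(y_{g}-z_{g}),
\]
where $\alpha_{g}=|gU\setminus U|/|U|<\varepsilon$ and $y_{g},z_{g}$ are themselves convex combinations of elements of the orbit $G\cdot x_{0}\subseteq C$. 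Hence $y_{g}-z_{g}\in C-C$, which is bounded in $X$, so $g\cdot x_{F,\varepsilon}-x_{F,\varepsilon}\in\varepsilon(C-C)$, a set that shrinks to $0$ in $X$ as $\varepsilon\to 0$. Directing $(F,\varepsilon)\leq(F',\varepsilon')$ by $F\subseteq F'$ and $\varepsilon\geq\varepsilon'$ then produces the desired approximate fixed point net.

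For (\ref{iiDiscrete}) $\Rightarrow$ (\ref{iDiscrete}), apply the AFP hypothesis to the canonical left translation action on
\[
P(G)=\{f\in\ell^{1}(G):\,f\geq 0,\ \|f\|_{1}=1\},
\]
a bounded convex subset of the LCS $\ell^{1}(G)$ on which $G$ acts continuously by affine isometries (continuity is automatic since $G$ is discrete). The resulting approximate fixed point net $(f_{i})\subseteq P(G)$ with $\|g\cdot f_{i}-f_{i}\|_{1}\to 0$ for every $g\in G$ is precisely Reiter's $(P_{1})$-condition, and any weak-$\ast$ cluster point of $(f_{i})$ inside $\ell^{\infty}(G)^{\ast}$ is a left-invariant mean on $\ell^{\infty}(G)$, witnessing amenability of $G$.

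I do not foresee any serious obstacle; the content of the argument is entirely carried by the classical equivalence of amenability with the F\o lner/Reiter conditions. The only subtle point is to avoid any use of a norm in the first direction and instead work with the defining property of boundedness in an LCS, namely that $\varepsilon\cdot B\to 0$ for every bounded set $B$ as $\varepsilon\to 0$.
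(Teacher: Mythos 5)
Your proof is correct. The direction (1) $\Rightarrow$ (2) is essentially identical to the paper's argument: both average the orbit of a base point over F\o lner sets and use the identity $|gU\setminus U|=|U\setminus gU|$ to rewrite $g\cdot x - x$ as a small multiple of a difference of two points of $C$, then invoke boundedness of $C-C$. The converse direction is where you genuinely diverge. The paper tests the AFP property against an arbitrary continuous affine action on a \emph{compact} convex set $K$: by compactness the approximate fixed point net accumulates, the accumulation points are genuine fixed points, and Day's fixed point theorem then yields amenability. You instead test the AFP property against the single concrete action of $G$ on $\mathrm{prob}(G)\subseteq \ell^1(G)$ by left translations, read off Reiter's condition from the approximate fixed point net, and extract an invariant mean as a weak-$\ast$ cluster point. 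Both routes are classical and sound. The paper's route is shorter on the page (it outsources everything to Day) and is reused verbatim in its locally compact and Polish theorems; your route has the advantage of yielding a formally sharper statement --- amenability already follows from the AFP property on the one bounded closed convex set $\mathrm{prob}(G)\subseteq\ell^1(G)$, rather than on all compact convex sets --- and it is in fact the same mechanism the paper later exploits in the opposite direction for Proposition 3.6, where the failure of Reiter's condition for $\mathbb{F}_2$ acting on $\mathrm{prob}(\mathbb{N})$ shows that $S_\infty$ lacks the AFP property. The only point worth spelling out in your write-up is the standard verification that a weak-$\ast$ cluster point $m$ of $(f_i)$ is invariant: for $\phi\in\ell^\infty(G)$ one bounds $|\langle g\cdot f_i - f_i,\phi\rangle|\leq \norm{g\cdot f_i-f_i}_1\norm{\phi}_\infty\to 0$ and passes to the limit along the cluster subnet.
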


\begin{proof}
(\ref{iDiscrete}) $\Rightarrow$ (\ref{iiDiscrete}). Let $G$ a discrete amenable
group acting by continuous affine maps on a bounded convex subset $C$ of a
locally convex space $X$. Choose a F\o lner's $(\Phi_{i})_{i\in I}$ net, that is, a net
of finite subsets of $G$ such that
\[\frac{\vert g \Phi_{i}\bigtriangleup
\Phi_{i}\vert}{\vert \Phi_{i}\vert}\longrightarrow 0\quad \forall g \in G.\]
Now, let
$\gamma\in G$, fix $x\in C$ and define $x_{i}=\dfrac{1}{|\Phi_{i}|}\underset{g\in
\Phi_{i}}{\overset{}{\sum}}gx$. Since $C$ is convex, $x_i\in C$ for all $i\in I$.
Notice that $|\Phi_{i}\setminus \gamma \Phi_{i}|= |\gamma\Phi_{i}\setminus
\Phi_{i}|= \frac{1}{2}|\gamma\Phi_{i}\triangle\Phi_{i}|$ for all $i\in I$. Therefore we
have
\[\begin{array}{llllllll}x_{i}-\gamma x_{i}&=
\dfrac{1}{|\Phi_{i}|}\left[\underset{g\in \Phi_{i}}{\overset{}{\sum}}gx- \underset{g\in \Phi_{i}}{\overset{}{\sum}}\gamma gx \right]
\\\\
&=\dfrac{1}{|\Phi_{i}|}\left[\underset{g\in \Phi_{i}}{\overset{}{\sum}}gx- \underset{h\in \gamma \Phi_{i}}{\overset{}{\sum}}hx \right] \\\\
&=\dfrac{1}{|\Phi_{i}|}\left[\underset{g\in (\Phi_{i}\setminus \gamma \Phi_{i})}{\overset{}{\sum}}gx- \underset{g\in (\gamma\Phi_{i}\setminus \Phi_{i})}{\overset{}{\sum}}gx \right]\\\\
&=\dfrac{|\gamma\Phi_{i}\triangle\Phi_{i}|}{2|\Phi_{i}|}\left[\dfrac{1}{|\Phi_{i}\setminus \gamma \Phi_{i}|}\underset{g\in (\Phi_{i}\setminus \gamma \Phi_{i})}{\overset{}{\displaystyle\sum}}gx-
\dfrac{1}{|\gamma\Phi_{i}\setminus \Phi_{i}|}\underset{g\in (\gamma\Phi_{i}\setminus \Phi_{i})}{\overset{}{\displaystyle\sum}}gx \right]
\end{array}\]
Thus $x_{i}-\gamma x_{i}\in
\dfrac{|\gamma\Phi_{i}\triangle\Phi_{i}|}{2|\Phi_{i}|}(C-C)$ and hence
$x_{i}-\gamma x_{i}\longrightarrow 0$ since $C$ is bounded.

\vskip.2cm

 (\ref{iiDiscrete}) $\Rightarrow$ (\ref{iDiscrete}). Let $G$ be a discrete group acting continuously and by affine transformations on a nonempty compact and convex set $K$ in a locally convex space $X$.
By hypothesis, there is a net $(x_{i})\subseteq K$ such that $\forall  g \in
G,\,\,x_{i}-gx_{i}\longrightarrow 0$. By compactness of $K$, this net has
accumulation points in $K$. Since $\forall \, g \in G,\,\,x_{i}-gx_{i}\longrightarrow
0$, this insures invariance of accumulation points and shows the existence of a
fixed point in $K$. Therefore $G$ is an amenable group by Day's fixed point
theorem mentioned in the Introduction.
\end{proof}

\subsection{Case of locally compact groups}
Recall from \citep{Bourb} the following notion of integration of functions with range in a locally convex space.\\
Let $F$ be a locally convex vector space on $\mathbb{R}$ or $\mathbb{C}$.
$F^{\prime}$ denotes the dual space of $F$ and  $F^{\prime \ast}$ the algebraic dual of  $F^{\prime}$. We identify as
usual $F$ (seen as a vector space without topology) with a subspace of
$F^{\prime \ast}$ by associating to any $z\in F$ the linear form $F^{\prime}\ni
z^{\prime} \longmapsto  \langle z, z^{\prime} \rangle \in \mathbb{R}$.


Let $T$ be a locally compact space and let $\mu$ a positive measure on $T$. A
map $f: T\longrightarrow F$ is essentially $\mu$-{\em integrable} if for every
element $z^{\prime}\in F^{\prime},\,\, \langle z^{\prime},f \rangle$ is essentially
$\mu$-integrable. If $f: T\longrightarrow F$ is essentially $\mu$-integrable, then
$z^{\prime} \longmapsto \displaystyle{\int_{T}^{}} \langle z^{\prime},f
\rangle~\textrm{d}\mu$ is a linear map on $F^{\prime}$, i.e. an element of
$F^{\prime \ast}$. The {\em integral} of $f$ is the element of $F^{\prime \ast}$
denoted $\displaystyle{\int_{T}^{}}f~\textrm{d}\mu$ and defined by the condition:
$ \langle z^{\prime}, \displaystyle{\int_{T}^{}}f~\textrm{d}\mu
\rangle=\displaystyle{\int_{T}^{}} \langle z^{\prime},f \rangle~\textrm{d}\mu$ for
every $z^{\prime}\in F^{\prime}$.

Note that, in general we don't have $\displaystyle{\int_{T}^{}}f~\textrm{d}\mu \in
F$. But we have the following.

\begin{proposition}[\citep{Bourb}, chap. $3$, Proposition $7$]
Let $T$ be a locally compact space, $E$ a LCS and $f: T\longrightarrow E$ a
function with compact support. If $f(T)$ is contained in a complete (with regard to
the additive uniformity) convex subset of $E$, then
$\displaystyle{\int_{T}^{}}f~\textrm{d}\mu \in E$.
\end{proposition}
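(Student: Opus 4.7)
The plan is to realize $\int_T f \, d\mu$ as a limit of Riemann-type sums whose values already lie in a scaled translate of $K$, and then appeal to completeness of $K$ to pull that limit back into $E$. Let $S = \mathrm{supp}(f)$, which is compact by hypothesis, and write $m = \mu(S)$. For any finite Borel partition $\pi = (A_1, \dots, A_n)$ of $S$ together with sample points $t_i \in A_i$, form the Riemann sum $R_\pi = \sum_{i=1}^{n} \mu(A_i)\, f(t_i)$. Since the coefficients $\mu(A_i)$ are nonnegative and sum to $m$, and each $f(t_i)\in K$ (where $K$ is the given complete convex set containing $f(T)$), convexity gives $R_\pi \in mK$.

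Next I would order the partitions $\pi$ by refinement (together with the sample-point data) and verify two convergence statements. First, for each $z^\prime \in E^\prime$ the scalar Riemann sums $\langle z^\prime, R_\pi\rangle = \sum_i \mu(A_i)\, \langle z^\prime, f(t_i)\rangle$ tend to $\int_T \langle z^\prime, f\rangle \, d\mu = \langle z^\prime, \int_T f \, d\mu\rangle$; this is the standard convergence of Riemann sums for the $\mu$-integrable scalar function $\langle z^\prime, f\rangle$, possibly after restricting to a cofinal subfamily of partitions along which the scalar convergence holds simultaneously for the countably many data that matter. Second, along such a cofinal subfamily I claim the net $(R_\pi)$ is Cauchy in the original locally convex topology of $E$. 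Granting these two facts, completeness of $mK$ forces $(R_\pi)$ to converge to some $y \in mK \subseteq E$, and the first convergence identifies $y$ with $\int_T f \, d\mu$ because $E^\prime$ separates points of $E^{\prime\ast}$.

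The hard part is the second convergence: for an arbitrary measurable, not necessarily continuous, $f$, naive Riemann sums need not be Cauchy in the vector topology of $E$. I would handle this by invoking the Radon character of $\mu$ on the locally compact space $T$ together with a Lusin-type approximation: given a continuous seminorm $p$ on $E$ and $\varepsilon > 0$, restrict to a compact $S_\varepsilon \subseteq S$ with $\mu(S\setminus S_\varepsilon)$ arbitrarily small on which $f$ is continuous, so that on sufficiently fine partitions of $S_\varepsilon$ the $p$-oscillation of the corresponding Riemann contribution is controlled, while the contribution from $S\setminus S_\varepsilon$ stays in a small multiple of $K-K$ and is absorbed into the seminorm estimate. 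Pasting these estimates seminorm by seminorm yields the desired Cauchy property in $E$, and the completeness of $K$ then closes the argument.
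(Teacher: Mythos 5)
First, a point of orientation: the paper does not prove this proposition at all --- it is quoted from Bourbaki (\emph{Int\'egration}, chap.~III), where $f$ is assumed to be a \emph{continuous} function with compact support, a hypothesis the paper's restatement drops but which is what is actually used later (the orbit map $g\mapsto gx$ restricted to a compact F\o lner set). So your argument can only be measured against the classical one. Your skeleton is exactly right and is essentially the standard route: the Riemann sums $R_\pi=\sum_i\mu(A_i)f(t_i)$ lie in $\mu(S)K$ by convexity, they converge scalarly to the weak integral, and completeness of $K$ is what pulls the limit back into $E$.

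The genuine gap is in your handling of the Cauchy property for merely measurable $f$. You claim the contribution of $S\setminus S_\varepsilon$ ``stays in a small multiple of $K-K$ and is absorbed into the seminorm estimate,'' but $K$ is only assumed convex and complete, not bounded, so $\mu(S\setminus S_\varepsilon)\cdot(K-K)$ need not be small for any continuous seminorm; moreover Lusin's theorem for maps into a nonmetrizable LCS is itself delicate. Under the intended continuity hypothesis all of this evaporates: $f(S)$ is compact, so for each continuous seminorm $p$ and $\varepsilon>0$ one can partition $S$ into finitely many Borel sets on which $f$ has $p$-oscillation at most $\varepsilon$, and any two refinements of that partition give Riemann sums within $2\varepsilon\mu(S)$ of each other. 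This also removes your ``cofinal subfamily'' hedge, which is in fact dangerous as stated: different functionals $z^\prime$ could require different cofinal subfamilies, whereas you need $\langle z^\prime,y\rangle=\int_T\langle z^\prime,f\rangle\,\textrm{d}\mu$ for \emph{all} $z^\prime$ simultaneously to identify $y$ with the integral; for continuous $f$ each $\langle z^\prime,f\rangle$ is bounded on $S$ and the full refinement net of scalar Riemann sums converges, so no restriction is needed. Finally, a cleaner variant avoids the Cauchy estimate entirely: since $f(S)$ is compact and contained in the complete convex set $K$, its closed convex hull is precompact and complete, hence compact; the Riemann sums lie in $\mu(S)$ times this compact convex set, which is $\sigma(E,E^\prime)$-compact and therefore closed in $E^{\prime\ast}$ for the $\sigma(E^{\prime\ast},E^\prime)$ topology, so their weak-$\ast$ limit $\int_T f\,\textrm{d}\mu$ already lies in $E$.
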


\begin{theorem}\label{main2}
The following are equivalent for a locally compact group $G$:
 \begin{enumerate}
   \item  \label{iLC} $G$ is amenable,
   \item   \label{iiLC} $G$ has the AFP property on every complete, convex, and bounded subset of a locally convex space.
 \end{enumerate}
\end{theorem}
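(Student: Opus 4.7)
The plan is to mirror the discrete proof, replacing finite averages over F\o lner sets by Haar-measure averages in the Bourbaki sense. The easy direction $(\ref{iiLC})\Rightarrow(\ref{iLC})$ follows the same template as in the discrete theorem: every nonempty compact convex subset $K$ of a locally convex space is complete in the additive uniformity, so the AFP hypothesis applied to any continuous affine action of $G$ on such a $K$ yields a net $(x_i)\subseteq K$ with $x_i-gx_i\to 0$ for all $g\in G$; compactness produces an accumulation point, which is necessarily fixed, and Day's theorem then delivers amenability.

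For $(\ref{iLC})\Rightarrow(\ref{iiLC})$, I fix a continuous affine action of an amenable locally compact group $G$ on a complete, bounded, convex set $C\subseteq X$. By F\o lner's condition I choose a net of compact sets $U_i\subseteq G$ with $\lambda(U_i)>0$ and $\lambda(\gamma U_i\triangle U_i)/\lambda(U_i)\to 0$ for every $\gamma\in G$. Picking any $x\in C$, I set
\[
x_i=\frac{1}{\lambda(U_i)}\int_{U_i} gx\,\mathrm{d}\lambda(g),
\]
interpreting the integral in the vector-valued sense recalled above from \citep{Bourb}. The orbit map $g\mapsto gx$ is continuous, so the integrand has compact support $U_i$ with image contained in the complete convex set $C$; the proposition cited from \citep{Bourb} then places the integral in $X$, and since $C$ is closed convex containing the orbit piece, the normalized barycenter $x_i$ lies in $C$.

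A direct computation using left-invariance of $\lambda$ yields
\[
\gamma x_i - x_i = \frac{1}{\lambda(U_i)}\left[\int_{\gamma U_i\setminus U_i} hx\,\mathrm{d}\lambda(h) - \int_{U_i\setminus\gamma U_i} hx\,\mathrm{d}\lambda(h)\right],
\]
which, paralleling the discrete calculation, rewrites as $\frac{\lambda(\gamma U_i\triangle U_i)}{2\lambda(U_i)}(y_i-z_i)$, with $y_i,z_i\in C$ the barycenters of the normalized restrictions of $\lambda$ to $\gamma U_i\setminus U_i$ and $U_i\setminus\gamma U_i$, respectively. Hence $\gamma x_i - x_i$ lies in a vanishing scalar multiple of the bounded set $C-C$, and boundedness of $C$ together with the F\o lner condition forces $\gamma x_i - x_i\to 0$.

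The main obstacle, absent from the discrete setting, is verifying that $x_i$ actually belongs to $C$: convexity of $C$ is not sufficient, since the Bourbaki integral a priori lives only in the algebraic bidual of $X$. One must invoke completeness of $C$ (together with the compactness of the F\o lner sets $U_i$) to pull the integral back into $X$ and indeed into $C$. This is precisely what forces the completeness hypothesis in the statement and motivates the preliminaries on vector-valued integration recalled from \citep{Bourb}.
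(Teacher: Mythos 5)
Your proposal is correct and follows essentially the same route as the paper: F\o lner averaging via the Bourbaki vector-valued integral over compact F\o lner sets for $(\ref{iLC})\Rightarrow(\ref{iiLC})$, and reduction to Day's theorem on compact convex sets (which are complete, so the hypothesis applies) for the converse. Your final estimate, expressing $\gamma x_i-x_i$ as $\frac{\lambda(\gamma U_i\triangle U_i)}{2\lambda(U_i)}(y_i-z_i)$ with $y_i,z_i\in C$, is only a cosmetic variant of the paper's seminorm bound over the symmetric difference, and is equally valid.
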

\begin{proof}

 (\ref{iLC}) $\Rightarrow$ (\ref{iiLC}).
 Let $G$ be a locally compact amenable group acting continuously by affine maps on a complete, bounded, convex subset $C$ of a locally convex space $X$. Again, select a F\o lner net $(F_{i})_{i\in I}$  of compact subsets of $G$ such that $\frac{\lambda(g F_{i}\bigtriangleup F_{i})}{\lambda(F_{i})}\longrightarrow
 0\quad \forall g \in G$.
 Fix $x\in C$ and let $\eta_{x}: G \ni g
 \longmapsto gx \in C$ be the corresponding orbit map. Define
$x_{i}=\dfrac{1}{\lambda(F_{i})}\displaystyle{\int_{F_{i}}^{}}\eta_{x}(g)~\textrm{d}\lambda(g)$.
By the above, this is an element of $C$; the {\em barycenter} of the push-forward
measure $(\eta_x)_{\ast}(\lambda\vert_{F_i})$ on $X$. We have, just like in the
discrete case:
\[\begin{array}{llllllll}x_{i}-\gamma x_{i}&=
\dfrac{1}{\lambda(F_{i})}\left[\displaystyle{\int_{F_{i}}^{}}\eta_{x}(g)~\textrm{d}\lambda(g)- \gamma\displaystyle{\int_{F_{i}}^{}}\eta_{x}(g)~\textrm{d}\lambda(g) \right]
\\\\
&=\dfrac{1}{\lambda(F_{i})}\left[\displaystyle{\int_{F_{i}}^{}}\eta_{x}(g)~\textrm{d}\lambda(g)- \displaystyle{\int_{F_{i}}^{}}\eta_{x}(\gamma g)~\textrm{d}\lambda(g) \right] \\\\
&=\dfrac{1}{\lambda(F_{i})}\left[\displaystyle{\int_{F_{i}}^{}}[\eta_{x}(g)-\eta_{x}(\gamma g)]~\textrm{d}\lambda(g) \right] \\\\&=\dfrac{1}{\lambda( F_{i})}\left[\displaystyle
{\int_{\gamma F_{i}\bigtriangleup F_{i}}^{}}\eta_{x}(v)~\textrm{d}\lambda(v) \right].
\end{array}\]

Now, let $q$ be any continuous seminorm on $C$. We have:
$$q(x_{i}-\gamma x_{i})\leq \frac{\lambda(\gamma F_{i}\bigtriangleup
F_{i})}{\lambda(F_{i})}K$$ where $K=\underset{v\in G}{\overset{}{\sup}}\, q\circ
\eta_{x}(v)< \infty$ since $C$ is bounded. Thus $x_{i}-\gamma
x_{i}\longrightarrow 0$.

\vskip.2cm

 (\ref{iiLC}) $\Rightarrow$ (\ref{iLC}).
 Same argument as in the case of discrete groups.
\end{proof}

The assumption of completeness of $C$ does not look natural in the context of
approximate fixed points, but we do not know if it can be removed. It depends on
the answer to the following.

\begin{question}
Let $f$ be an affine homeomorphism of a bounded convex subset $C$ of a locally
convex space $X$. Can $f$ be extended to a continuous map (hence, a
homeomorphism) of the closure of $C$ in $X$?\\
\end{question}

Nevertheless, we can prove the following.

\begin{theorem}
Every amenable locally compact group $G$ has a weak approximate fixed point
property  on each bounded convex subset $C$ of a locally convex space $X$.
\end{theorem}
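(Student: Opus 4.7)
The plan is to retrace the proof of Theorem~\ref{main2}, but, since $C$ is no longer assumed complete (so the vector-valued barycenter $\int_{F_j}\eta_x\,d\lambda/\lambda(F_j)$ need not lie in $C$), to replace it with a finite convex combination of orbit points that approximates it in any prescribed finite collection of weak directions.

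First I would fix $x_0\in C$, a F{\o}lner net $(F_j)_{j\in J}$ of compact subsets of $G$, and write $\mu_j$ for the normalized restriction of left Haar measure to $F_j$. For every $\phi\in X^{\ast}$ and $\gamma\in G$ the scalar function $f_{\phi,\gamma}(g)=\phi(\gamma gx_0)$ is continuous on $G$ and bounded by $M_{\phi}:=\sup_{y\in C}|\phi(y)|<\infty$, since $\phi$ is continuous and $C$ is bounded. I would index my net by the directed set $I$ of quadruples $\alpha=(j,\mathcal{F},\mathcal{G},\varepsilon)$, where $\mathcal{F}\subset X^{\ast}$ and $\mathcal{G}\subset G$ are finite (with $e\in\mathcal{G}$), $\varepsilon>0$, and $j$ is chosen so that $\lambda(\gamma F_j\bigtriangleup F_j)/\lambda(F_j)<\varepsilon$ for every $\gamma\in\mathcal{G}$.

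For each such $\alpha$ I would construct $x_{\alpha}\in C$ by partitioning $F_j$ into Borel pieces $B_1,\dots,B_N$ so small that each $f_{\phi,\gamma}$ with $\phi\in\mathcal{F}$, $\gamma\in\mathcal{G}$ has oscillation less than $\varepsilon$ on every $B_k$, choosing representatives $g_k\in B_k$ with weights $t_k=\mu_j(B_k)$, and setting $x_{\alpha}=\sum_{k}t_k g_k x_0$. Convexity together with $G$-invariance of the action give $x_{\alpha}\in C$, and because each $\gamma$ acts affinely one has $\gamma x_{\alpha}=\sum_{k}t_k(\gamma g_k)x_0$, so the oscillation bound yields
\[ \Bigl|\phi(\gamma x_{\alpha})-\int_{F_j}\phi(\gamma gx_0)\,d\mu_j(g)\Bigr|<\varepsilon\qquad(\phi\in\mathcal{F},\ \gamma\in\mathcal{G}). \]
Combined with the Haar-invariance computation used in Theorem~\ref{main2}, which gives
\[ \Bigl|\int_{F_j}\phi(gx_0)\,d\mu_j-\int_{F_j}\phi(\gamma gx_0)\,d\mu_j\Bigr|\leq M_{\phi}\frac{\lambda(\gamma F_j\bigtriangleup F_j)}{\lambda(F_j)}<M_{\phi}\varepsilon, \]
the triangle inequality forces $|\phi(x_{\alpha}-\gamma x_{\alpha})|\leq(2+M_{\phi})\varepsilon$ for every $\phi\in\mathcal{F}$ and $\gamma\in\mathcal{G}$. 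Letting $\alpha$ refine delivers a net in $C$ with $\phi(x_{\alpha}-\gamma x_{\alpha})\to 0$ for all $\phi\in X^{\ast}$ and $\gamma\in G$, i.e.\ a weak approximate fixed point net.

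The main obstacle is the content of the third paragraph: constructing, inside $C$ itself, a finite convex combination of orbit points whose image under the finitely many chosen continuous affine functionals approximates the would-be integral barycenter. Once this ``scalar Riemann sum'' substitute for the vector-valued integral is in place, the rest of the proof is a faithful transcription of the argument for Theorem~\ref{main2} into the weak topology, with $C$ itself now playing the role that its completion played before.
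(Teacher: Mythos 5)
Your proof is correct, and the underlying idea is the same as the paper's: since the vector-valued barycenter of $(\eta_{x_0})_{\ast}(\lambda\rest F_j)$ only lives in the completion of $C$, replace it by finite convex combinations of orbit points, which do lie in $C$ and approximate it weakly. Where you differ is in the technical realization. The paper produces the finitely supported approximants abstractly, as a net $\nu_j$ of finitely supported measures converging vaguely to $\lambda\rest F_i$, and then argues $y_j\Rightarrow x_i$, $g\cdot y_j\Rightarrow z_i^g$ and $x_i-z_i^g\to 0$, leaving implicit the diagonalization needed to merge the F{\o}lner index $i$ with the approximation index $j$ into one approximate fixed point net. You instead build the approximants concretely as Riemann sums over a Borel partition of $F_j$ on which the finitely many functions $g\mapsto\phi(\gamma gx_0)$ have small oscillation (legitimate, by uniform continuity on the compact set $F_j$), and you fold the F{\o}lner index, the finite set of functionals, the finite set of group elements, and the tolerance into a single directed set. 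This buys an explicit, self-contained quantitative estimate $|\phi(x_{\alpha}-\gamma x_{\alpha})|\leq(2+M_{\phi})\varepsilon$ and avoids both the vague-convergence machinery and the iterated-limit issue; the paper's route is shorter on the page but relies on the reader supplying exactly the bookkeeping you carried out. The only blemishes are cosmetic: the Riemann-sum oscillation bound gives $\leq\varepsilon$ rather than $<\varepsilon$, and your final constant could be stated more carefully, but neither affects the conclusion.
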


\begin{proof}
In the notation of the proof of Theorem \ref{main2}, let
$\mu_{i}=(\eta_x)_{\ast}(\lambda\vert_{F_i})$ denote the push-forward of the
measure $\lambda_{i}=\lambda \rest F_{i}$ along the orbit map $\eta_x\colon G
\ni g\longmapsto gx \in C$. Let $x_{i}$ be the barycenter of $\mu_{i}$. This time,
$x_{i}$ need not belong to $C$ itself, but will belong to the completion $\hat{C}$,
of $C$ (the closure of $C$ in the locally convex vector space completion $\hat
X$).

For every $g\in G$, denote $z_i^g$ the
barycenter of the measure $g.\mu_{i}=(\eta_x)_{\ast}(g\lambda_i)$. Just like in the proof of Theorem \ref{main2}, for every $g$ we have $x_i-z_i^g\to 0$.

Now select a net $\nu_{j}$ of
measures with finite support on $G$, converging to $\lambda_{i}$ in the vague
topology \citep{Bourb}. Denote $y_{j}$ the barycenter of the push-forward measure $(\eta_x)_{\ast}(\nu_j)$.
Then
$y_{j}\Rightarrow x_{i}$ in the vague topology on the space of finite
measures on the compact space $F_{i}.x$. Clearly, $y_j\in C$, and so $g\cdot y_j$ is well-defined and $g\cdot y_{j}\Rightarrow z^g_{i}$ for every $g\in\Phi$.
It follows
that $gy_{j}-y_{j}$ weakly converges to $0$ for every $g\in G$.
\end{proof}

\begin{remark}
Clearly the weak AFP property implies amenability of $G$ as well.
\end{remark}

\subsection{Case of Polish groups}
The above criteria do not generalize beyond the locally compact case in the
ways one might expect: not every amenable non-locally compact Polish group has the AFP property, even on a bounded convex subset of a Banach space.

\begin{proposition}\label{contreexample}
The infinite symmetric group $S_{\infty}$ equipped with its natural Polish
topology does not have the AFP property on closed convex bounded subsets of
$\ell^1$.
\end{proposition}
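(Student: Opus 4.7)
My plan is to exhibit a specific closed bounded convex subset $C\subseteq\ell^{1}$ carrying a continuous affine $S_{\infty}$-action that admits no approximate fixed point net. The natural candidate is the positive simplex $C=\{x\in\ell^{1}(\mathbb{N}):x_{i}\geq 0,\ \sum_{i}x_{i}=1\}$, on which $S_{\infty}$ acts by coordinate permutation, $(\sigma\cdot x)_{i}=x_{\sigma^{-1}(i)}$. This action is obviously affine, $\ell^{1}$-isometric, and preserves $C$. To check continuity with respect to the Polish (pointwise) topology on $S_{\infty}$ and the norm topology on $\ell^{1}$, I would use the standard $\varepsilon/2$ argument: any $x\in\ell^{1}$ has all but $\varepsilon/2$ of its mass on some finite $F\subset\mathbb{N}$, and $\sigma_{n}\to\sigma$ in $S_{\infty}$ implies $\sigma_{n}|_{F}=\sigma|_{F}$ for large $n$, so $\sigma_{n}x$ and $\sigma x$ agree on $\sigma(F)$ while each carries mass $<\varepsilon/2$ outside $\sigma(F)$.

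The heart of the proof is a means-theoretic contradiction. Suppose toward contradiction that $(x_{\alpha})\subset C$ is an approximate fixed point net. Associate to each $x_{\alpha}$ the mean $m_{\alpha}\in\ell^{\infty}(\mathbb{N})^{*}$ defined by $m_{\alpha}(f)=\sum_{i}x_{\alpha}^{(i)}f(i)$; each $m_{\alpha}$ is a state on $\ell^{\infty}(\mathbb{N})$. By weak-$*$ compactness of the state space, extract a cluster point $m$. For every $f\in\ell^{\infty}(\mathbb{N})$ and every $\sigma\in S_{\infty}$,
\[|m_{\alpha}(f)-m_{\alpha}(\sigma\cdot f)|\leq\|f\|_{\infty}\,\|x_{\alpha}-\sigma^{-1}x_{\alpha}\|_{1}\longrightarrow 0,\]
so $m$ is an $S_{\infty}$-invariant mean on $\ell^{\infty}(\mathbb{N})$.

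To kill such a mean, I would exploit the high transitivity of $S_{\infty}$ on countably infinite subsets of $\mathbb{N}$. For any partition $\mathbb{N}=A_{1}\sqcup\cdots\sqcup A_{n}$ into $n$ infinite parts, pairing up enumerations yields a $\sigma\in S_{\infty}$ cycling the $A_{i}$, which by invariance forces $m(\mathbf{1}_{A_{i}})=1/n$. Applying this both to a $2$-partition $\mathbb{N}=A\sqcup B$ and to a $3$-partition $\mathbb{N}=B_{1}\sqcup B_{2}\sqcup B_{3}$ chosen with $A=B_{1}\sqcup B_{2}$ produces $m(\mathbf{1}_{A})=1/2$ and simultaneously $m(\mathbf{1}_{A})=m(\mathbf{1}_{B_{1}})+m(\mathbf{1}_{B_{2}})=2/3$, the required contradiction. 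The only genuinely delicate point I anticipate is the joint-continuity verification for the coordinate-permutation action under the Polish topology on $S_{\infty}$; everything downstream is a clean exercise in invariant means.
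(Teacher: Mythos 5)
Your proposal is correct, and it uses the same convex set and the same coordinate-permutation action on $\mathrm{prob}(\mathbb{N})\subset\ell^{1}$ as the paper, but the contradiction is reached by a genuinely different route. The paper identifies $\mathbb{N}$ with $\mathbb{F}_{2}$ and embeds $\mathbb{F}_{2}$ into $S_{\infty}$ as a closed discrete subgroup via left translations; an approximate fixed point net for $S_{\infty}$ then restricts to almost invariant vectors for the left regular representation of $\mathbb{F}_{2}$ on $\ell^{1}(\mathbb{F}_{2})$, i.e.\ Reiter's condition with $p=1$, contradicting the non-amenability of the free group. You instead push the net forward to a weak-$*$ cluster point of states on $\ell^{\infty}(\mathbb{N})$, obtaining an $S_{\infty}$-invariant mean, and then kill it by the classical partition argument ($1/2\neq 2/3$). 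Your computation $|m_{\alpha}(f)-m_{\alpha}(\sigma\cdot f)|\le\|f\|_{\infty}\|x_{\alpha}-\sigma^{-1}x_{\alpha}\|_{1}$ is the correct duality estimate, the cluster point inherits invariance along the subnet, and the cyclic permutations of an $n$-partition into infinite pieces do exist in $S_{\infty}$, so the argument closes. What each approach buys: the paper's proof is a two-line reduction once one grants two standard external facts (non-amenability of $\mathbb{F}_{2}$ and the Reiter characterization of amenability), whereas yours is entirely self-contained, avoids amenability theory altogether, and moreover shows that the obstruction is already witnessed by the subgroup generated by just two permutations (one cycling a $2$-partition, one a $3$-partition) --- a slightly sharper localization than the paper's copy of $\mathbb{F}_{2}$. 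Your continuity verification for the action is also sound; the only cosmetic caveat is that you should phrase the approximate fixed point hypothesis for nets (as in the paper's Definition of the AFP property) rather than sequences, which you in fact do.
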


If we think of $S_\infty$ as the group of all self-bijections of the natural numbers ${\mathbb {N}}$, then the natural (and only) Polish topology on $S_\infty$ is induced from the embedding of $S_\infty$ into the Tychonoff power ${\mathbb {N}}^{\mathbb {N}}$, where ${\mathbb {N}}$ carried the discrete topology.

We will use the following well-known criterion of amenability for locally
compact groups.

\begin{theorem}[Reiter's condition]
  Let $p$ be any real number with $1\leq p < \infty$. A locally compact
group $G$ is amenable if and only if for any compact set $C\subseteq G$ and
$\varepsilon > 0$, there exists $f\in L^{p}(G)$, $f\geq 0$, $\norm{f}_p=1$, such
that: $\|g \cdot f-f\|<\varepsilon$ for all $g\in C$.
\end{theorem}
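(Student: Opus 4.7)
The plan is to handle the two implications separately, with the nontrivial direction reduced to the existence of an invariant mean on $RUCB(G)$.

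For amenable $\Rightarrow$ Reiter, I would invoke the Følner condition already stated. Given a compact set $C \subseteq G$ and $\varepsilon > 0$, choose a Borel set $U$ of finite positive Haar measure with $\lambda(gU \bigtriangleup U)/\lambda(U) < \varepsilon^p$ for every $g \in C$, and take $f = \lambda(U)^{-1/p}\mathbf{1}_U$. Then $f \geq 0$, $\|f\|_p = 1$, and the identity $(g \cdot f)(x) = \lambda(U)^{-1/p}\mathbf{1}_{gU}(x)$ gives $\|g \cdot f - f\|_p^p = \lambda(gU \bigtriangleup U)/\lambda(U) < \varepsilon^p$ for every $g \in C$, as required.

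For the converse Reiter $\Rightarrow$ amenable, the plan is to construct a left-invariant mean on $RUCB(G)$ as a weak-$\ast$ cluster point. Let the index set consist of pairs $\alpha = (C, \varepsilon)$ with $C \subseteq G$ compact symmetric and $\varepsilon > 0$, directed by $(C_1, \varepsilon_1) \preceq (C_2, \varepsilon_2)$ when $C_1 \subseteq C_2$ and $\varepsilon_2 \leq \varepsilon_1$. For each $\alpha$, pick $f_\alpha$ as in Reiter's condition, and define a mean $\mathrm{m}_\alpha$ on $L^\infty(G)$ by $\mathrm{m}_\alpha(\phi) = \int \phi \, f_\alpha^p \, d\lambda$. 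By weak-$\ast$ compactness of the set of means on $L^\infty(G)$, the net $(\mathrm{m}_\alpha)$ admits a cluster point $\mathrm{m}$. Using left-invariance of $\lambda$, the substitution $y = g^{-1}x$ yields
\[\mathrm{m}_\alpha(g\phi) - \mathrm{m}_\alpha(\phi) = \int \phi(y)\bigl[f_\alpha(gy)^p - f_\alpha(y)^p\bigr]\,d\lambda(y),\]
so that $|\mathrm{m}_\alpha(g\phi) - \mathrm{m}_\alpha(\phi)| \leq \|\phi\|_\infty \, \|g^{-1} \cdot f_\alpha^p - f_\alpha^p\|_1$.

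The main obstacle is converting the $L^p$-smallness $\|g \cdot f_\alpha - f_\alpha\|_p < \varepsilon$ supplied by Reiter's condition into the $L^1$-smallness of $\|g \cdot f_\alpha^p - f_\alpha^p\|_1$ needed above. I would rely on the pointwise estimate $|a^p - b^p| \leq p(a + b)^{p-1}|a - b|$ for $a, b \geq 0$ and $p \geq 1$, combined with Hölder's inequality with conjugate exponents $p$ and $p/(p-1)$ and the fact that $\|f_\alpha\|_p = 1$ is preserved by left translation; this yields a bound of the form $\|g \cdot f_\alpha^p - f_\alpha^p\|_1 \leq p \cdot 2^{p-1} \, \|g \cdot f_\alpha - f_\alpha\|_p$, with the case $p = 1$ being trivial. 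Once this inequality is in place, for each fixed $g \in G$ and $\phi \in L^\infty(G)$ the difference $|\mathrm{m}_\alpha(g\phi) - \mathrm{m}_\alpha(\phi)|$ is uniformly small for $\alpha$ sufficiently large in the order, so the cluster point $\mathrm{m}$ satisfies $\mathrm{m}(g\phi) = \mathrm{m}(\phi)$ for all $g \in G$. Restricting $\mathrm{m}$ to $RUCB(G) \subseteq L^\infty(G)$ produces the left-invariant mean required by Definition \ref{defam}, and $G$ is amenable.
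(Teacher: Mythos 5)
The paper does not prove this statement at all: Reiter's condition is quoted as a ``well-known criterion of amenability for locally compact groups'' and used as a black box in the proof of Proposition \ref{contreexample}. Your proposal supplies the standard textbook argument, and it is correct. The forward direction via F\o lner sets is right: with $f=\lambda(U)^{-1/p}\mathbf{1}_U$ one gets exactly $\|g\cdot f-f\|_p^p=\lambda(gU\bigtriangleup U)/\lambda(U)$, so requesting the F\o lner bound with $\varepsilon^p$ does the job. The converse is also sound: $f_\alpha^p$ is a probability density, the set of means on $L^\infty(G)$ is weak-$*$ compact, and the only real work is the passage from $L^p$-almost-invariance of $f_\alpha$ to $L^1$-almost-invariance of $f_\alpha^p$; your inequality $\|a^p-b^p\|_1\le p\,\|a+b\|_p^{p-1}\,\|a-b\|_p\le p\,2^{p-1}\|a-b\|_p$ (mean value theorem plus H\"older with exponents $p/(p-1)$ and $p$, using $\|a\|_p=\|b\|_p=1$) is the classical way to do it, and you correctly flag that the symmetry of $C$ in the directed set is what lets you control $\|g^{-1}\cdot f_\alpha^p-f_\alpha^p\|_1$ after the change of variables. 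Restricting the resulting left-invariant mean from $L^\infty(G)$ to $RUCB(G)$ is exactly what Definition \ref{defam} requires. Two cosmetic points you may wish to make explicit: the unnormalized statement $\|g\cdot f-f\|<\varepsilon$ in the theorem is the $L^p$-norm, and for $p=1$ the H\"older step degenerates but is not needed since $|a-b|=|a^1-b^1|$ directly.
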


\begin{proof}[Proof of Proposition \ref{contreexample}]
Denote $\mathrm{prob}(\mathbb{N})$ the
set of all Borel probability measures on $\mathbb{N}$, in other
words, the set of positive functions $b:\mathbb{N}\longrightarrow [0,1]$ such that
$\underset{n\in \mathbb{N}}{\overset{}{\sum}}b(n)=1$. This is the intersection of
the unit sphere of $\ell^{1}$ with the cone of positive elements, a closed convex bounded subset of $\ell^1$.
The Polish group $S_{\infty}$ acts canonically on $\ell^{1}$ by
permuting the coordinates:
$$S_{\infty}\times \ell^{1}(\mathbb{N})\ni (\sigma, (x_{n})_{n})\longmapsto \sigma \cdot(x_{n})_{n}= (x_{\sigma^{-1}(n)})_{n}\in\ell^{1}(\mathbb{N}).$$
Clearly, $\mathrm{prob}(\mathbb{N})$ is invariant and the
restricted action is affine and continuous. We will show that the action of
$S_{\infty}$ on $\mathrm{prob}(\mathbb{N})$
admits no approximate fixed point sequence.

Assume the contrary.
Make the free group $\mathbb{F}_{2}$ act on itself
by left multiplication and identify $ \mathbb{F}_{2}$ with $\mathbb{N}$. In this
way we embed $\mathbb{F}_{2}$ into $S_{\infty}$ as a closed discrete subgroup.
This means that the action of $\mathbb{F}_{2}$ by left regular representation on
$\mathrm{prob}(\mathbb{N})\cong \mathrm{prob}(\mathbb{F}_{2})$ also has
almost fixed points, and $\ell^{1}(\mathbb{F}_{2})$, with regard to the left regular representation of $\mathbb{F}_{2}$, has almost invariant vectors. But this is the
Reiter's condition ($p=1$) for $\mathbb{F}_{2}$, a contradiction with
non-amenability of this group.
\end{proof}

However, it is still possible to characterize amenability of Polish groups in terms of the AFP property.

\begin{theorem}\label{con3}
The following are equivalent for a Polish group $G$:
\begin{enumerate}
  \item  \label{iReflexive} $G$ is amenable,
  \item \label{iiReflexive} $G$ has the AFP property on every bounded, closed and convex subset of the Hilbert space.
  \end{enumerate}
\end{theorem}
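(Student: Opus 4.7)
The easier direction is (\ref{iiReflexive}) $\Rightarrow$ (\ref{iReflexive}). Given a continuous affine action of $G$ on a compact convex metrizable set $K$ in a locally convex space, affinely embed $K$ into a bounded closed convex subset of $\ell^{2}$ as follows. Since $K$ is compact metric, $C(K)$ is separable, hence so is its closed subspace $A(K)$ of continuous affine real-valued functions; fix a countable dense family $\{\phi_n\}$ in the unit ball of $A(K)$ and map $x \mapsto (2^{-n}\phi_n(x))_{n} \in \ell^{2}$. This is an affine homeomorphism of $K$ onto its image, transporting the $G$-action to a continuous affine action on a bounded closed convex subset of $\ell^{2}$. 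Hypothesis (\ref{iiReflexive}) supplies an approximate fixed point net there; by compactness of $K$ one extracts a cluster point which, by injectivity of the embedding, is a common fixed point of $G$ on $K$. The Polish amenability Proposition cited in Section 2 then gives amenability of $G$.

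The substantive direction is (\ref{iReflexive}) $\Rightarrow$ (\ref{iiReflexive}). Let $G$ be an amenable Polish group acting continuously and affinely on a bounded closed convex subset $C \subseteq H$. The plan is to pass to the weak topology on a suitable $G$-invariant subset and apply the cited Polish amenability Proposition to produce an exact fixed point (which is in particular a trivial approximate fixed point).

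Fix $x_0 \in C$ and set $K := \overline{\mathrm{conv}(G \cdot x_0)}$ (norm closure). Then $K$ is bounded, closed, convex, and $G$-invariant; since $G$ is separable Polish and the orbit map is continuous, $K$ is norm separable, so by replacing $H$ with the closed linear span of $K$ we may assume $H$ itself is separable. By reflexivity, $K$ is weakly compact, and by separability of $H$ the weak topology on $K$ is metrizable, so $K_{w}$ is a compact metrizable convex subset of the LCS $(H, \sigma(H, H^{\ast}))$. Next, each $g \in G$ acts weakly continuously on $K$: the graph of $g|_{K}$ is convex (by affineness) and norm-closed (by continuity), hence weakly closed by Mazur's theorem, and a self-map of a compact Hausdorff space with closed graph is automatically continuous. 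Together with the norm (hence weak) continuity of the orbit maps $g \mapsto gx$, this makes the action of $G$ on $K_{w}$ separately continuous.

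Apply now the Polish amenability Proposition from Section 2 to the compact convex metrizable $G$-space $K_{w}$ to obtain a fixed point $x^{\ast} \in K \subseteq C$; the constant net $(x^{\ast})$ is an approximate fixed point net for the original norm-topology action, which is what we wanted. The one delicate point is the justification for applying that Proposition: we have verified separate weak continuity but not obviously joint weak continuity of $G \times K_{w} \to K_{w}$. This is handled either by invoking Day's fixed point theorem for amenable semitopological semigroups acting by separately continuous affine maps on a compact convex subset of an LCS, or by upgrading separate to joint weak continuity using the joint norm continuity of the original action together with the weak compactness of $K$. This is the only nontrivial ingredient; the rest of the argument is a routine reduction.
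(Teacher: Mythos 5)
Your backward direction (\ref{iiReflexive}) $\Rightarrow$ (\ref{iReflexive}) is essentially the paper's argument: affinely embed the compact metrizable convex set into $\ell^{2}$ via a countable dense family in the unit ball of the affine functions, transport the action, and convert an approximate fixed point net into a genuine fixed point by compactness, then invoke the Polish amenability proposition. (The paper additionally arranges the dense family to be invariant under a countable dense subgroup $H$, defines the action of $H$ on the image first, and extends by continuity; since your $T$ is an affine homeomorphism onto its image, the direct transport you use already produces a jointly continuous action, so that detour is unnecessary.)

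In the forward direction you genuinely diverge from the paper, and instructively so. The paper keeps all of $C$ and proves joint weak continuity of the action by a direct estimate against inner products with points $x_{i}\in C$; that computation passes through the identity $\langle x_{i}, u\cdot y\rangle=\langle u^{-1}x_{i}, y\rangle$, which presupposes that $u$ acts by a unitary (or at least adjointable linear) operator and is not available for a general affine action on $C$. Your route --- restrict to the weakly compact, weakly metrizable orbit closure $K$, obtain weak continuity of each $g|_{K}$ from Mazur's theorem together with the closed-graph criterion for maps into compact Hausdorff spaces, and then apply the fixed-point proposition to $K_{w}$ --- sidesteps that issue entirely and is the more robust argument; restricting to $K$ costs nothing, since a fixed point in $K\subseteq C$ gives a constant approximate fixed point net in $C$. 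The one step you leave open, passing from separate to joint continuity of $G\times K_{w}\to K_{w}$, is genuinely required (the cited proposition concerns jointly continuous actions), but it is closed by a standard result rather than by the somewhat vague ``upgrade using joint norm continuity'' you suggest: by Namioka's theorem, a separately continuous map from $G\times K_{w}$ (with $G$ Polish, hence completely metrizable, and $K_{w}$ compact metric) into a metric space is jointly continuous at every point of $A\times K_{w}$ for some dense $G_{\delta}$ set $A\subseteq G$, and translating by group elements then yields joint continuity everywhere; this is the Ellis--Namioka phenomenon for \v{C}ech-complete groups acting on compact spaces. With that reference supplied, your proof is complete.
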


\begin{proof}
 (\ref{iReflexive}) $\Rightarrow$  (\ref{iiReflexive}).
 It is enough to show that a norm-continuous affine action of $G$ on a bounded
closed convex subset $C$ of $\ell^{2}$ is continuous with regard to the weak
topology, because then there will be a fixed point in $C$ by Day's theorem.

Let $x\in C$ and $g\in G$ be any, and let $V$ be a weak neighborhood of $g.x$
in $C$. The weak topology on the weakly compact set $C$ coincides with the $\sigma (\mathrm{span}\, C, \ell^{2})$
topology, hence one can choose $x_{1},
x_{2},...,x_{n}\in C$ and $\varepsilon > 0$ so that $y\in V$ whenever $|\langle
x_{i}, y-gx\rangle| < \varepsilon $ for all $i$.

Denote $K$ the diameter of $C$.
Because the action is norm-continuous,
we can find $U\ni e$ in $G$ so that $\norm{u^{-1}x_{i}-x_i}<\varepsilon/2K$ for all $i$.
The set $Ug$
is a neighborhood of $g$ in $G$.

As a weak neighbourhood of $x$, take the set $W$ formed by all $\zeta\in C$ with $|\langle
g^{-1}x_{i}, \zeta-x\rangle| < \varepsilon/2 $ for all $i$. Equivalently, the condition on $\zeta$ can be stated $|\langle
x_{i}, g\zeta-gx\rangle| < \varepsilon/2 $ for all $i$.

If now  $u\in U$ and $\zeta \in W$, one has

\begin{eqnarray*}
 | \langle x_{i}, (ug)\cdot \zeta -gx\rangle|&=&
  \left\vert\langle u^{-1}x_{i}, g\zeta\rangle - \langle x_{i},
  gx\rangle\right\vert
\\
&=&
\left\vert\langle u^{-1}x_{i}, g\zeta\rangle-\langle x_i,g\zeta\rangle\right\vert +\left\vert \langle x_i,g\zeta\rangle - \langle x_{i},
  gx\rangle\right\vert
\\
&\leq& \norm{u x_i-x_i}\cdot K +\frac{\varepsilon}{2} \\
&=&\varepsilon.
\end{eqnarray*}

This shows that $(ug)\cdot \zeta\in V$, and so the action of $G$ on $C$ is continuous with regard to the weak
topology.

\vskip.2cm

(\ref{iiReflexive}) $\Rightarrow$  (\ref{iReflexive}). Suppose that $G$ acts continuously and by affine transformations on a
compact convex and metrizable subset $Q$ of a LCS $E$.
If $C(Q)$ is equipped
with the usual norm topology, then $G$ acts continuously by affine
transformations on the subspace $A(Q)$ of $C(Q)$ consisting of affine
continuous functions on $Q$. Since $Q$ is a metrizable compact set, the space
$C(Q)$ is separable, so is the space $A(Q)$. Fix a dense countable subgroup $H$ of $G$, and
let $F=\{f_{n}:n\in \mathbb{N}\}$
be a dense subset of the closed unit ball of $A(Q)$ which is $H$-invariant. The
map $T:Q\ni x\longmapsto (\frac{1}{n}f_{n}(x))\in \ell^{2}$  is an affine
homeomorphism of $Q$ onto a convex compact subset of $\ell^{2}$. The
subgroup $H$ acts continuously and by affine transformations on the affine
topological copy $T(Q)$ of $Q$ by, the obvious rule $h\cdot T(x)=T(h\cdot x)$. The action
of $H$ extends by continuity to a continuous affine action of $G$ on $T(Q)$. By
hypothesis, $G$ admits an approximative fixed point sequence in $T(Q)$, and
every accumulation point of this sequence is a fixed point since $Q$ is compact.
Therefore $G$ is amenable.

\end{proof}

\section{Discussion and concluding remarks}

\subsection{Approximately fixed sequences}

As we have already noted, we do not know if every locally compact amenable
group has the AFP property on all convex bounded subsets of locally convex
spaces. Another interesting problem is to determine when does an acting group
possess not merely an approximately fixed net, but an approximate fixed
sequence.

Recall that a topological group $G$ is $\sigma$-compact if it is a union of countably many compact subsets.
 It is easy to see that if an
 amenable locally compact group $G$ is $\sigma$-compact, then it admits an approximate fixed sequence for every continuous action by affine maps on a closed bounded convex set.

\begin{question}
Let $G$ be a metrizable separable group acting continuously and affinely on a
convex bounded subset $C$ of a metrizable and locally convex space. If the
action has an approximate fixed net, does there necessarily exist an
approximate fixed sequence?
\end{question}

This is the case, for example if $G$ is the union of a countable chain of
amenable locally compact (in particular, compact) subgroups, and the convex set $C$ is complete.

Recall in this connection that amenability (and thus Day's fixed point property) is
preserved by passing to the completion of a topological group. At the same time, the AFP property is not preserved by completions. Indeed, the group $S_{\infty}^{fin}$ of all permutations of integers with finite support is amenable as a discrete group, and so, equipped with any group topology, will have the AFP property on every bounded convex subset of a locally convex space. However, its completion with regard to the pointwise topology is the Polish group $S_{\infty}$ which, as we have seen, fails the AFP property on a bounded convex subset of $\ell^1$.

\begin{question}
Does every amenable group whose left and right uniformities are equivalent (a
SIN group) have the AFP property on complete convex sets?
\end{question}

\subsection{Distal actions}
Let $G$ be a topological group acting by homeomorphisms on a compact set
$Q$. The flow $(G,Q) $ is called {\em distal} if whenever
$\lim_{\alpha}s_{\alpha} \cdot x =\lim_{\alpha} s_{\alpha} \cdot y$ for some net $s_{\alpha}$
in $G$, then $x = y$. A particular class of distal flows is given by  {\em
equicontinuous} flows, for which the collection of all maps $x\mapsto g\cdot x$
forms an equicontinuous family on the compact space $Q$. We have the
following fixed point theorem:

\begin{theorem}[\citealt{Hahn}]
If a compact affine flow $(G,Q)$ is distal, then there is a $G$-fixed point.
\end{theorem}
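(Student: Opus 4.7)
The plan is to combine Zorn's lemma (to find a minimal closed convex invariant set), the Krein--Milman / Milman theorems (to extract an extreme point inside an orbit closure), and the distality hypothesis (to force the minimal set to collapse to a point). This is the classical Furstenberg-type argument.

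First I would use Zorn's lemma on the family $\mathcal{F}$ of non-empty closed convex $G$-invariant subsets of $Q$, ordered by reverse inclusion. Compactness of $Q$ makes any chain in $\mathcal{F}$ have non-empty intersection, so $\mathcal{F}$ has a minimal element $M$. The goal is to show that $M$ is a singleton; its unique point is then the desired fixed point. Suppose for contradiction that there exist distinct $x_{0}, y_{0} \in M$, and set $m_{0} = \tfrac{1}{2}(x_{0}+y_{0}) \in M$.

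Next I would apply minimality twice. The closed convex hull $\overline{\mathrm{co}}\,(G m_{0})$ is a non-empty closed convex $G$-invariant subset of $M$, so by minimality it equals $M$. By the Krein--Milman theorem, $M$ has an extreme point $e$; by Milman's converse to Krein--Milman, every extreme point of $\overline{\mathrm{co}}\,(G m_{0}) = M$ lies in the closure $\overline{G m_{0}}$. Hence there exists a net $(g_{\alpha})$ in $G$ with $g_{\alpha} m_{0} \to e$.

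Now I would invoke compactness and the affine structure. Passing to a subnet, assume $g_{\alpha} x_{0} \to x^{\ast}$ and $g_{\alpha} y_{0} \to y^{\ast}$ in $M$. Since each $g_{\alpha}$ is affine and continuous,
\[
e \;=\; \lim_{\alpha} g_{\alpha} m_{0} \;=\; \lim_{\alpha} \tfrac{1}{2}\bigl(g_{\alpha} x_{0} + g_{\alpha} y_{0}\bigr) \;=\; \tfrac{1}{2}(x^{\ast}+y^{\ast}).
\]
Because $e$ is an extreme point of $M$, this forces $x^{\ast} = y^{\ast} = e$. But then $\lim g_{\alpha} x_{0} = \lim g_{\alpha} y_{0}$, and distality yields $x_{0} = y_{0}$, contradicting our choice.

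The main technical point (and the step that makes the argument non-trivial) is bridging between the \emph{convex} minimality of $M$ and an \emph{orbit} closure, which is exactly the role of Milman's theorem: without it, one cannot guarantee that an extreme point can be produced as a limit of translates of a single chosen point $m_{0}$. Once that is in hand, distality does the rest essentially automatically. A routine variant would instead start from any extreme point $e$ of $M$ and note that $\overline{\mathrm{co}}\,(Ge) = M$, reducing the argument to showing that $\overline{Ge}$ contains only extreme points, but the midpoint formulation above is the most transparent way to exploit distality.
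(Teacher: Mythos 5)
Your argument is correct. Note first that the paper does not prove this statement at all --- it is quoted from Hahn as a known result --- so there is no in-paper proof to compare against. What you have written is the standard classical proof: Zorn's lemma produces a minimal non-empty closed convex $G$-invariant set $M$; if $M$ contained two distinct points $x_0,y_0$, then minimality forces $M=\overline{\mathrm{co}}\,(Gm_0)$ for the midpoint $m_0$, Krein--Milman gives an extreme point $e$ of $M$, Milman's converse places $e$ in $\overline{Gm_0}$, and affineness plus extremality of $e$ forces $\lim g_\alpha x_0=\lim g_\alpha y_0$ along a subnet, which distality turns into $x_0=y_0$. Every step checks out, including the invariance of $\overline{\mathrm{co}}\,(Gm_0)$ under the affine continuous $G$-action and the passage to a subnet on which both $g_\alpha x_0$ and $g_\alpha y_0$ converge. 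The only hypothesis you use implicitly and should perhaps state is that ``compact affine flow'' means $Q$ is a compact convex subset of a Hausdorff locally convex space, which is exactly what Krein--Milman and Milman require; this is the reading intended by the paper. You correctly identify Milman's theorem as the pivot between convex minimality and orbit closures, which is indeed the one non-routine ingredient.
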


An earlier result by \citep{kakutani} established the same for the class of
equicontinuous flows.

\begin{question}
  Is there any approximate fixed point analogue of the above results for distal or equicontinuous actions by a topological group on a (non-compact) bounded convex set $Q$?
\end{question}

\subsection{Non-affine maps}
Historically, Day's theorem (and before that, the theorem of Markov and
Kakutani) was inspired by the classical Brouwer fixed point theorem \citep{Brou}
and its later more general versions, first for Banach spaces \citep{Schauder} and
later for locally convex linear Hausdorff topological spaces \citep{Tycho}.
(Recently it was extended to topological vector spaces \citep{Cauty}). The
Tychonoff fixed point theorem states the following. Let $C$ be a nonvoid compact
convex subset of a locally convex space and let $f\colon C\longrightarrow C$ be
a continuous map. Then $f$ has a fixed point in $C$. The map $f$ is not
assumed to be affine here.

However, for a common fixed point of more than one function, the
situation is completely different. Papers \citep{Boyce} and
\citep{Huneke} contain independent examples of two commuting maps $f,g:
[0,1] \longrightarrow [0,1]$ without a common fixed point. Hence if a common
fixed point theorem were to hold, there should be further restrictions on the nature of transformations beyond amenability, and for Day's theorem, this restriction is that the transformations are affine.

The Tychonoff fixed point theorem is being extended in the context of approximate fixed points. For instance, here is one recent elegant result.

\begin{theorem}[\citealt{Kalenda}]
  Let $X$ be a Banach space. Then every nonempty, bounded, closed, convex subset
  $C\subseteq X$ has the weak AFP property with regard to each continuous map $f:C\longrightarrow C$ if and only if $X$ does not contains an isomorphic copy of $\ell^{1}$.
\end{theorem}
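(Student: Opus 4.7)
The statement is a two-way characterization, so I would attack the two implications separately, using different tools.

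\textbf{Necessity ($X\supset\ell^{1}$ forces failure).} I would work inside an isomorphic copy of $\ell^{1}$ sitting as a closed subspace $Y\subset X$. Applying the theorem of \citep{LIN} to any non-compact bounded closed convex $C\subset Y$ yields a Lipschitz self-map $f:C\to C$ with $\inf_{x\in C}\|x-f(x)\|>0$, so no norm approximate fixed point sequence for $f$ exists. Since $\ell^{1}$ has the Schur property, weakly convergent sequences in $Y$ are automatically norm convergent, so there is no weak approximate fixed point sequence in $Y$ either. As the weak topology of $Y$ is the restriction of the weak topology of $X$, this counterexample lifts to $X$ and defeats the weak AFP property on $C\subset X$.

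\textbf{Sufficiency ($X\not\supset\ell^{1}$ forces weak AFP).} The central tool is Rosenthal's $\ell^{1}$ theorem: every bounded sequence in $X$ admits a weakly Cauchy subsequence. Given $f:C\to C$ continuous with $C$ bounded, closed and convex, fix any $x_{0}\in C$ and consider the orbit $x_{n}=f^{n}(x_{0})$. The plan is that this orbit, after a suitable subsequence extraction, furnishes a weak approximate fixed point sequence. Concretely, by Rosenthal plus a standard diagonal argument, one produces a subsequence $(x_{m_{k}})$ such that every forward shift $(x_{m_{k}+\ell})_{k}$ is itself weakly Cauchy; the scalar limits $L_{\phi}(\ell):=\lim_{k}\phi(x_{m_{k}+\ell})$ then exist for each $\phi\in X^{*}$, and a telescoping/averaging argument over $\ell$, together with a countable density argument applied to the unit ball of the dual of the separable closed linear span of $\{x_{n}\}$, is designed to extract the required weak approximate fixed point sequence in $C$.

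\textbf{Main obstacle.} The entire difficulty is located in the sufficiency direction. Because $f$ is only norm-continuous, and is neither affine nor weakly continuous, the Day--Markov--Kakutani style averaging used in the affine setting is unavailable: the Cesàro means $\tfrac{1}{N}\sum_{n=0}^{N-1}f^{n}(x_{0})$ do not commute with $f$, so their weak approximate fixed point property has to be replaced by a purely sequential extraction based on Rosenthal's theorem. Even after producing the weakly Cauchy array $(x_{m_{k}+\ell})_{k}$, telescoping only delivers Cesàro vanishing of the functional-differences $L_{\phi}(\ell+1)-L_{\phi}(\ell)$ rather than pointwise vanishing (an obstruction already visible in simple alternating patterns like $1,-1,1,-1,\dots$), so the genuine work lies in promoting vanishing-on-average to vanishing-along-a-subsequence uniformly over countably many test functionals. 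I expect this combinatorial selection to be the technical heart of the proof.
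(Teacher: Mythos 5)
First, note that the paper does not prove this statement at all: it is quoted verbatim from \citep{Kalenda} as a known result in the concluding discussion, so there is no internal proof to compare against. Judged on its own terms, your necessity half is correct and is indeed the standard argument: take a non-compact bounded closed convex $C$ inside an isomorphic copy $Y$ of $\ell^{1}$, apply the Lin--Sternfeld theorem \citep{LIN} to get a Lipschitz $f:C\to C$ with $\inf_{x\in C}\Vert x-f(x)\Vert>0$, and use the Schur property (an isomorphic invariant) plus the fact that the weak topology of $Y$ is the restriction of that of $X$ to rule out weak approximate fixed point sequences as well.

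The sufficiency half, however, has a fatal gap, and it is not merely the ``technical heart'' you defer --- the orbit-based strategy itself cannot work. If the approximate fixed point sequence is sought inside the orbit $x_{n}=f^{n}(x_{0})$, then $x_{n}-f(x_{n})=x_{n}-x_{n+1}$, and already for $C=[0,1]$ (inside any Banach space) and $f(x)=1-x$ the orbit of $x_{0}=0$ is $0,1,0,1,\dots$, so $\Vert x_{n}-f(x_{n})\Vert=1$ along \emph{every} subsequence; no amount of Rosenthal extraction, diagonalization, or promotion of Ces\`aro vanishing to subsequential vanishing can repair this, because the approximate fixed points of $f$ simply need not be visited by the orbit of a given point. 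The actual proof runs along entirely different lines, and its two ingredients are essentially Theorem~1.2(2)--(3) quoted in the Introduction from \citep{Cleon3,Cleon4}: first one shows, for an arbitrary Banach space, that $0\in\overline{\{x-f(x):x\in C\}}^{\,w}$, by approximating $f$ with maps having finite-dimensional range (Schauder-projection style) and applying Brouwer's theorem on finite-dimensional convex sections to produce points $x$ with $x-f(x)$ small against any prescribed finite set of functionals; then, and only then, does the hypothesis $\ell^{1}\not\hookrightarrow X$ enter, via the Odell--Rosenthal and Bourgain--Fremlin--Talagrand theorems, which make the relevant weak closure of a bounded set angelic and thereby upgrade the net implicit in ``$0$ lies in the weak closure'' to an actual sequence $(x_{n})$ with $x_{n}-f(x_{n})\rightharpoonup 0$. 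Rosenthal's $\ell^{1}$ theorem is thus used for sequential accessibility of weak closure points, not for taming iterates of $f$.
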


We do not know if a similar program can be pursued for topological groups.

\begin{question}
Does there exist a non-trivial topological group $G$ which has the approximate
fixed point property with regard to every continuous action (not necessarily affine)
on a bounded, closed convex subset of a locally convex space? of a Banach
space?
\end{question}

\begin{question}
  The same, for the weak AFP property.
\end{question}

Natural candidates are the {\em extremely amenable groups}, see e.g. \cite{Pestov2}. A topological group is extremely amenable if every continuous action of $G$ on a compact space $K$ has a common fixed point. The action does not have to be affine, and $K$ is not supposed to be convex. This is a very strong nonlinear fixed point property.

Some of the most basic examples of extremely amenable Polish groups are:

\begin{enumerate}
     \item  The unitary group $U(\ell^2)$ with the strong operator topology \citep{GrM}.
     \item The group ${\mathrm{Aut}}\,(\Q)$ of order-preserving bijections of the rational
numbers with the topology induced from $S_\infty$ \citep{Pestov1}.
     \item The group ${\mathrm{Aut}}\,(X,\mu)$ of measure preserving transformations of a
standard Lebesgue measure space equipped with the weak topology.
\citep{GP1}.
\end{enumerate}

However, at least the group ${\mathrm{Aut}}\,(\Q)$ does not have the AFP
property with regard to continuous actions on the Hilbert space. To see this, one
can use the same construction as in Proposition \ref{contreexample}, together
with the well-known fact that ${\mathrm{Aut}}\,(\Q)$ contains a closed discrete
copy of $\mathbb{F}_2$.

\section*{Acknowedgements}
Cleon S. Barroso is currently as a visiting researcher scholar at the Texas A$\&$M University. He takes the opportunity to express his gratitude to Prof. Thomas Schlumprecht for his support and friendship. Also, he acknowledges Financial Support form CAPES by the Science Without Bordes Program, PDE 232883/2014-9. Brice R. Mbombo was
supported by FAPESP postdoctoral grant, processo 12/20084-1. Vladimir G. Pestov is a Special Visiting Researcher of the program Science Without Borders of CAPES (Brazil), processo 085/2012.

\bibliographystyle{agsm}
\bibliography{biblio}
\nocite{*}
\end{document}